\documentclass[12pt]{amsart}
\usepackage[T1]{fontenc}
\usepackage[utf8]{inputenc}
\usepackage{amssymb}
\usepackage{amsmath}
\usepackage{amsfonts}
\usepackage{amscd}
\setcounter{MaxMatrixCols}{30}%
\usepackage{verbatim}
\usepackage[mathscr]{euscript}
\usepackage[dvipsnames,usenames]{color}

\numberwithin{equation}{section}

\newcommand{\n}[1]{\|#1\|}
\textwidth=14cm
\textheight=22cm
\hoffset=-1.5cm
\voffset=-0.3cm

\def\a{\alpha}       \def\b{\beta}           \def\g{\gamma}
            \def\ev{\varepsilon}
\def\z{\zeta}                  \def\th{\theta}
          \def\la{\lambda}     
                    \def\p{\psi}

\def\t{\tau}          \def\m{\mu}

           \def\O{\Omega}

\def\dth{\frac{d\theta}{2\pi}}
\newcommand{\hol}{{\mathcal H}}

\DeclareMathOperator{\og}{O}

\def\D{{\mathbb D}}
\def\T{{\mathbb T}}
\def\C{{\mathbb C}}  
 \def\R{{\mathbb R}}

 \def\cb{{\mathcal B}}

\newtheorem{theorem}{Theorem}[section]
\newtheorem{corollary}[theorem]{Corollary}

\begin{document}

\title[Semigroups in Morrey spaces]{Semigroups of composition
operators in analytic Morrey spaces}

\author[P. Galanopoulos]{P. Galanopoulos}
\address{Department of Mathematics,
Aristotle University of Thessaloniki, 54124 Thessaloniki, Greece}
\email{petrosgala@math.auth.gr}

\author[N. Merch\'{a}n]{N. Merch\'{a}n}
\address{Departamento de Matem\'{a}tica Aplicada, ETSII, 
Universidad de M\'{a}laga, Campus de Teatinos, 29071 M\'{a}laga, Spain} \email{noel@uma.es}

 \author[A. G. Siskakis]{A. G. Siskakis}
 \address{Department of Mathematics,
Aristotle University of Thessaloniki, 54124 Thessaloniki, Greece}
 \email{siskakis@math.auth.gr}

\date{\today}
\subjclass{Primary 30H05, 32A37, 47B33, 47D06; Secondary 46E15}
\keywords{Semigroups, composition
operators, Morrey spaces}

\begin{abstract}
Analytic Morrey spaces  belong to the class of function spaces which, like BMOA, are defined in terms of the degree of
oscillation on the boundary of functions analytic in the  unit disc. We consider semigroups of composition operators on these
spaces and focus on the question of strong continuity. It is shown that these semigroups behave like on BMOA.
\end{abstract}
\maketitle

\section{Introduction}

Let $\D$ be the unit disc in the complex plane $\C$. A family $(\phi_t)_{t\geq 0}$ of
analytic self maps of $\D$ is a \textit{semigroup of  functions}
if  the following conditions hold\\

\noindent
(s1) $\phi_0$ is the identity map of $\D$,\\
(s2) $\phi_s\circ \phi_t=\phi_{s+t} $ for all $s, t \geq 0$,\\
(s3) $\phi_t\to \phi_0$ uniformly on compact subsets of $\D$ as $t\to 0$.\\

Each such $(\phi_t)$ induces a semigroup $(C_t)$ of linear transformations on the Fr\`{e}chet  space $\hol(\D)$ of all analytic
functions on $\D$, by composition
$$
C_t(f)(z)=f(\phi_t(z)), \quad f\in \hol(\D),
$$
and the following pointwise convergence holds
$$
\lim_{t\to 0^{+}}f(\phi_t(z))= f(z), \quad z\in \D.
$$
If $(X, \n{\,}_X)$ is a Banach space consisting of analytic functions on $\D$, and
$(\phi_t)$ is a semigroup of functions  we say that $(\phi_t)$ \textit{acts} on $X$ if
each composition operator $C_t$ is  bounded  on $X$. If in addition
\begin{equation}\label{sc}
 \lim_{t\to 0^{+}}\n{f\circ\phi_t-f}_X =0
\end{equation}
for each $f\in X$ then  $(C_t)$ is  \textit{strongly continuous}  on $X$. In this case
it is interesting to study  how   operator theoretic properties of the operator
semigroup $(C_t)$ relate to   function theoretic properties of $(\phi_t)$.

The study of composition semigroups on spaces of analytic functions started in \cite{Berkson-Porta}
where  E. Berkson and H.
Porta  studied  the basic properties of semigroups of functions $(\phi_t)$, and proved  that they
induce  strongly continuous
 operator semigroups   on Hardy spaces. Other  authors  studied   strong continuity on Bergman
spaces $A^p$, Dirichlet spaces,   the spaces $BMOA$ and  the Bloch space $\cb$, and their
subspaces $VMOA$ and $\cb_0$.
More recently strong continuity was studied on  mixed norm spaces $H(p, q, \alpha)$
in \cite{ArevContrerasRondrig}. Also  in
the recent article \cite{AndersonJovovicSmith} the authors gave a unified proof of
several earlier results, and showed that  no
nontrivial composition semigroup is strongly continuous on BMOA, thus extending similar results known
for the space of bounded analytic functions $H^{\infty}$ and the Bloch space $\cb$.

The purpose of this article is to study strong continuity of composition semigroups on
analytic Morrey spaces
$H^{2, \la}$. These spaces are defined in terms of the  oscillation of the  boundary
function. For $0<\la<1$ and for arcs  $I$ on the circle of length $|I|$, the oscillation is compared to
$|I|^{\la}$. The end point $\la=1$ corresponds to     BMOA.
For  each $0<\la<1$,  $BMOA\subset H^{2, \la}\subset H^2$,
 and  these spaces share several properties of BMOA. Their  definition and their properties are
 given in the next section.

For every analytic self map $\phi$ of the disc, the composition operator $f\to f\circ\phi$ is a bounded operator on
each $H^{2, \la}$. We will study   strong continuity of composition semigroups $(C_t)$ and will prove that
$(C_t)$ is not strongly continuous on $H^{2, \la}$ unless the it is trivial.

We will denote constants in various inequalities below by $C, C', ...$ and  their values may
change from one step to the next.

\vspace{1cm}

\section{Background on semigroups and Morrey spaces}

\subsection{Semigroups of functions and composition operators}
If $(\phi_t)$ is a semigroup of functions then each $\phi_t$ is univalent.
The limit
$$
G(z)=\lim_{t\to 0^{+}}\frac{\phi_{t}(z)-z}{t}, \quad z\in\D.
$$
exists  uniformly on compact subsets of $\D$, it is therefore an analytic function on
$\D$,  and is called the infinitesimal generator of $(\phi_t)$. The trivial semigroup,
$\phi_t(z)\equiv z$ for all $t$,  corresponds to the generator $G\equiv 0$. This
trivial case will be ignored.  The functional equation
\begin{equation}\label{gener}
G(\phi_{t}(z))=\frac{\partial\phi_{t}(z)}{\partial t}=G(z)\frac
{\partial\phi_{t}(z)}{\partial z},\quad  z\in\D, \,\, t\geq 0,
\end{equation}
is satisfied, and the generator $G$  has  a unique representation
$$
G(z)=(\overline{b}z-1)(z-b)P(z),\quad z\in\D
$$
where $b\in\overline{\D}$ and $P\in\hol(\D)$ with $\text{Re}(P)\geq 0$ on  $\D$.
The pair  $(b,P)$ is  uniquely determined by $\left(\phi_{t}\right)$. Conversely
every pair $(b, P)$ with $b\in\overline{\D}$ and $P\in \hol(\D)$ with
$\text{Re}(P)\geq 0$ gives rise to a semigroup of functions. The  point $b$ is  the
\textit{Denjoy-Wolff point} of the semigroup. When  $b\in \D$ then $b$ is the
common fixed point of all  $\phi_t$. The semigroup $(\phi_t)$ may  be normalized
by composing with automorphisms of the disc, and assume that $b=0$, and it can be represented
as
$$
\phi_t(z)=h^{-1}(e^{-ct}h(z)),\quad z\in \D, t\geq 0
$$
where $h$ is a univalent function mapping $\D$ onto a spirallike domain $\Omega$, with $h(0)=0$
and Re$(c)\geq 0$.

 When  $b\in \partial\D$ then every  orbit  $\{\g_z(t)=\phi_t(z): t\geq 0\}$
 converge to $b$ as $t \to \infty$ for each  $z\in \D$. In this case $b$ is also
a common fixed point in the sense that $\lim_{r\to 1^{-}}\phi_t(rb)=b$ for all $t$.
By a normalization it can be assumed that $b=1$, and $(\phi_t)$ has a representation
$$
\phi_t(z)=h^{-1}(h(z)+ct),\quad z\in \D, t\geq 0
$$
where $h$ is a univalent function mapping $\D$ onto a close-to-convex domain $\Omega$ with $h(0)=0$ and
Re$(c)>0$.
The above and other details can be found in \cite{Berkson-Porta} and
\cite{Siskakis2}.

If   $(\phi_t)$ is a semigroup and   $X$  a Banach space of analytic functions on $\D$
such that $(\phi_t)$ acts on $X$ but the induced operator semigroup $(C_t)$ is not
strongly continuous on $X$, it may still happen that $(C_t)$ is strongly continuous
on a nontrivial proper subspace of $X$. Thus  we define
 $$
 [\phi_t, X]=\{f\in X: \n{f\circ\phi_t - f}_X\to 0 \,\, \text{as}\,\, t\to 0\}.
 $$
 A triangle inequality shows that if $\sup_{0\leq t\leq 1}\n{C_t}_X<\infty$ then
 $[\phi_t, X]$ is a closed subspace of $X$ which is maximal with respect to the strong continuity
 requirement. If in addition $X$ contains the constant functions then
 \begin{equation}\label{max}
 [\phi_t, X]=\overline{\{f\in X: Gf'\in X\}}
 \end{equation}
 where $G$ is the generator of $(\phi_t)$, see \cite{BCDMPS}.
Note that in this notation, $[\phi_t, X]=X$ means that $(\phi_t)$ induces a
semigroup
 of bounded composition operators, which is strongly continuous on $X$.
If the space $X$ contains $H^{\infty}$ and $X_0$ is the closure of polynomials in
$X$, then for any semigroup $(\phi_t)$ we have
$$
X_0\subseteq [\phi_t, X],
$$
\cite[Corollary 1.3]{AndersonJovovicSmith}. This in particular implies that if
$H^{\infty}\subset X$ and polynomials are dense in $X$ then for every  $(\phi_t)$
the induced semigroup $(C_t)$ is strongly continuous on $X$.  This is the case for
Hardy spaces $H^p, 1\leq p<\infty$, \cite{Berkson-Porta} and the Bergman spaces
$A^p, 1\leq p<\infty, $ \cite{Siskakis0}. The above  also applies when $X=BMOA$
and implies that for every $(\phi_t)$ the induced  $(C_t)$ is strongly continuous  on $BMOA_0=VMOA$.

D. Sarason \cite{Sarason1} proved  that  VMOA consists of  those $f$ in BMOA that can be
approximated by their rotations inside BMOA. This translates in our language to that for the rotation
semigroup  $\phi_t(z) = e^{it}z$ we have   $[\phi_t, BMOA]=VMOA$. The same is true for the dilation semigroup
$\phi_t(z)= e^{-t}z$, and there other  semigroups $(\phi_t)$, different from
 rotations and dilations, whose maximal subspace of strong continuity coincides with VMOA;  such
 semigroups were studied in \cite{BCDMS}. There are analogous results for the Bloch space \cite{BCDMPS}.

Additional information for strong continuity of $(C_t)$  can be found in \cite{Siskakis1} for the
Dirichlet space, in \cite{Galano} for
weighted Dirichlet spaces, in \cite{Wirths-Xiao} for $Q_p$ spaces, in
\cite{AndersonJovovicSmith} and  \cite{Contreras-Diaz} for the disc algebra, and in
\cite{ArevContrerasRondrig} for mixed norm spaces.

\subsection{Analytic Morrey spaces}  Let  $L^2(\T)$ denote the  Hilbert space
of square integrable functions on the unit circle $\T$, and  $H^2$ the Hardy space of
all analytic functions $f$ on $\D$ such that
$$
\n{f}_{H^2}^2= \sup_{r<1}\int_0^{2\pi}|f(re^{i\theta})|^2\frac{d\theta}{2\pi}<\infty.
$$
For every $f\in H^2$ the radial limits $\tilde{f}(e^{i\theta})=\lim_{r\to 1^{-}}f(re^{i\theta})$
 exist a.e. on $\T$ and the boundary function $\tilde{f}$ is in $L^2(\T)$ with
 $\n{\tilde{f}}_{L^2(\T)}=\n{f}_{H^2}$. In addition the map $f\to \tilde{f}$ is an injection,
 and identifies $H^2$ as a closed subspace of $L^2(\T)$.
For an arc $I\subset\T$ of normalized length  $|I|=\frac{1}{2\pi}\int_{I}d\th$
and for $f\in L^2(\T)$
 let
$$
f_I=\frac{1}{|I|}\int_{I}f(e^{i\th})\,\dth
$$
 be  the average of $f$ over $I$. The quantity
\begin{equation}\label{1}
I(f)=\int_I|f(e^{i\th})-f_I|^2\,\dth= \int_I|f(e^{i\th})|^2\,\dth
-|I||f_I|^2
\end{equation}
converges  to $ 0$ as  $|I|\to 0$, and the  rate of the convergence depends on the degree of oscillation
of $f$ around its average $f_I$. Given  $0\leq \la \leq 1$ we may  isolate those $f\in L^2(\T)$ for
which  $I(f)/|I|^{\la} $ stays bounded or tends to $0$  as $|I|\to 0$, and thus define the space
\begin{equation}\label{2}
L^{2,\la}= \left\{f\in L^2(\T): \sup_{I\subset \T}\frac{1}{|I|^{\la}}\int_I|f(e^{i\th})-f_I|^2\,\dth<\infty\right\},
\end{equation}
and its  subspace
\begin{equation}\label{2_0}
L^{2,\la}_0= \left\{f\in L^2(\T): \lim_{|I|\to 0}\frac{1}{|I|^{\la}}\int_I|f(e^{i\th})-f_I|^2\,\dth =0\right\}.
\end{equation}
These are  the Morrey   spaces. They were  introduced
in connection with regularity of solutions of partial differential equations,
see  \cite{Morrey}, \cite{Camp1}, \cite{Camp2}. They are linear spaces complete  under the seminorm
\begin{equation}\label{s1}
p_1(f)=
 \sup_{I\subset \T}\left(\frac{1}{|I|^{\la}}\int_I|f(e^{i\th})-f_I|^2\,d\th\right)^{1/2}.
\end{equation}
For $\la=1$ these spaces are
BMO and VMO, the spaces of bounded and vanishing mean oscillation  respectively.
For $\la=0$ both spaces coincide with  $L^2(\T)$.

 Using the identification of $ H^2$ as a closed subspace of $L^2(\T)$, the analytic  Morrey spaces are
 $$
 H^{2, \la}=L^{2,\la}\cap H^2, \qquad H^{2, \la}_0=L^{2,\la}_0\cap H^2,
 $$
that is,  they consist of those analytic functions in $H^2$ whose boundary values belong to $L^{2, \la}$,
respectively  to $L^{2, \la}_0$. The restriction of (\ref{s1}) is then a seminorm  on $H^{2, \la}$. A second
seminorm on $H^{2, \la}$ is
\begin{equation}\label{s2}
p_2(f)=
\sup_{a\in \D}(1-|a|^2)^{\frac{1-\la}{2}}\n{f\circ\phi_a -f(a)}_{H^2},
\end{equation}
where  $\phi_a(z)=\frac{a-z}{1-\bar{a}z}$, $ |a|<1$, are the Mobius automorphism of $\D$ and this
seminorm is  equivalent to $p_1$, see \cite{Xiao-Xu}. The space $H^{2, \la}_0$ contains those $f$ for which
$$
\lim_{|a|\to 1}(1-|a|^2)^{\frac{1-\la}{2}}\n{f\circ\phi_a -f(a)}_{H^2}=0.
$$
  A third  equivalent seminorm is
\begin{equation}\label{s3}
p_3(f)=  \sup_{I\subset\T}\left(\frac{1}{|I|^{\la}}\int_{S(I)}|f'(z)|^2 (1-|z|)\,dA(z)\right)^{1/2}
\end{equation}
where $S(I)=\{z\in \D: 1-|I|\leq |z|<1, z/|z|\in I\}$ is the  Carleson box based on
the arc $I\subset\T$ and $dA(z)$ is the normalized Lebesgue area measure, see
\cite{Liu-Lou1}. Functions in $H^{2, \la}_0$ are then characterized by the condition
$$
\lim_{|I|\to 0}\frac{1}{|I|^{\la}}\int_{S(I)}|f'(z)|^2 (1-|z|)\,dA(z)=0.
$$

Each of the above seminorms becomes a norm  on $H^{2,\, \la}$ by
adding $|f(0)|$ to it. We will write $\n{f}_{2,  \la}$ for each of the three equivalent
norms, thus ignoring the involved constants.

We list below some properties that are needed later.
For  $0<\la\leq 1$, $H^{2, \la}_0$ is  the closure of the polynomials in
$H^{2, \la}$, and for  $0< \la < \mu \leq  1$,
$$
H^2\supset H^{2, \la}\supset H^{2, \la}_0\supset  H^{2, \mu}\supset  H^{2, \mu}_0\supset  BMOA\supset VMOA.
$$
Further  $H^{2, \la}$  contains the Hardy space $ H^{ \frac{2}{1-\la}}$, \cite{Liu-Lou2} and for each $f\in H^{2, \la}$
\begin{equation}\label{growth}
|f(z)|\leq \frac{C\n{f}_{2, \la}}{(1-|z|)^{\frac{1-\la}{2}}}, \quad z\in \D.
\end{equation}
 The function $f(z)=(1-z)^{-\frac{1-\la}{2}}$ attains the maximum growth and belongs to $H^{2, \la}$.
If $\phi:\D\to \D$ is analytic then the composition operator $ C_{\phi}(f)=f\circ\phi$ is bounded on $H^{2, \la}$ and
$$
\n{C_{\phi}(f)}_{2,\la}\leq C\left(\frac{1+|\phi(0)|}{1-|\phi(0)|}\right)^{\frac{1-\la}{2}}\n{f}_{2, \la}
$$
More information and properties of Morrey spaces  can be found in \cite{Liu-Lou1}, \cite{Liu-Lou2},
 \cite{Wang-Xiao}, \cite{Xiao-Yuan}, \cite{Xiao-Xu}

\section{Semigroups in $H^{2, \la}$}

Suppose $0<\la<1$. Since $H^{\infty}\subset H^{2, \la}$ and $H^{2, \la}_0$ is
the closure of polynomials in $H^{2, \la}$, it follows from \cite[Corollary
1.3]{AndersonJovovicSmith} that for every semigroup $(\phi_t)$,
$$
H^{2, \la}_0\subseteq [\phi_t, H^{2, \la}] \subseteq H^{2, \la}.
$$
The question arises if there can be equality in either of the above containments. The
following theorem is the analogue of  Sarason's
characterization of VMOA functions by their property that they can be approximated inside BMOA by their rotations
or dilations.

\begin{theorem}
Suppose $0<\la<1$. For $f\in H^{2, \la}$ the following are equivalent\\
(1) $f\in H^{2, \la}_0$\\
(2) $\lim_{t\to 0^{+}}\n{f(e^{it}z)-f}_{2, \la}=0$\\
(3) $\lim_{t\to 0^{+}}\n{f(e^{-t}z)-f}_{2, \la}=0$
\end{theorem}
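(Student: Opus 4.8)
The implications $(1)\Rightarrow(2)$ and $(1)\Rightarrow(3)$ require no new work. Both the rotation semigroup $\phi_t(z)=e^{it}z$ and the dilation semigroup $\phi_t(z)=e^{-t}z$ are semigroups of functions, so since $H^{\infty}\subset H^{2,\la}$ and $H^{2,\la}_0$ is the closure of the polynomials, \cite[Corollary 1.3]{AndersonJovovicSmith} already gives $H^{2,\la}_0\subseteq[\phi_t,H^{2,\la}]$ in each case; this is precisely the statement that an $f\in H^{2,\la}_0$ can be approximated in $\n{\cdot}_{2,\la}$ by its rotates, respectively its dilates. Thus it remains to prove the two reverse implications.

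The dilation direction $(3)\Rightarrow(1)$ is the easy one, because a dilate already lies in the small space. For $0<r<1$ the function $f_r(z)=f(rz)$ is analytic on a neighbourhood of $\overline{\D}$, so $f_r'$ is bounded there, say $|f_r'|\le M$. Combining this with the Carleson-box seminorm $p_3$ and the elementary bound $\int_{S(I)}(1-|z|)\,dA(z)\le |I|^{3}$ gives
$$\frac{1}{|I|^{\la}}\int_{S(I)}|f_r'(z)|^2(1-|z|)\,dA(z)\le M^2|I|^{\,3-\la}\longrightarrow 0\quad(|I|\to0),$$
since $3-\la>0$; hence $f_r\in H^{2,\la}_0$. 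As $H^{2,\la}_0$ is a closed subspace of $H^{2,\la}$ and hypothesis $(3)$ says $f_{e^{-t}}\to f$ in $\n{\cdot}_{2,\la}$ as $t\to0^+$, we conclude $f\in H^{2,\la}_0$.

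The heart of the matter, and where I expect the real difficulty, is the rotation direction $(2)\Rightarrow(1)$: a rotate $f(e^{it}z)$ is an isometric copy of $f$ and carries no extra regularity, so the preceding trick fails and one must instead average the rotates. Writing $R_tf(z)=f(e^{it}z)$, I would introduce the Cesàro (Fejér) means $\sigma_Nf(z)=\int_{-\pi}^{\pi}K_N(t)\,R_tf(z)\,\frac{dt}{2\pi}$, where $K_N$ is the Fejér kernel. Computing Fourier coefficients shows $\sigma_Nf(z)=\sum_{k=0}^{N}\bigl(1-\tfrac{k}{N+1}\bigr)a_kz^k$ when $f(z)=\sum a_kz^k$, so each $\sigma_Nf$ is a polynomial and therefore lies in $H^{2,\la}_0$. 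Since $\int_{-\pi}^{\pi}K_N\,\frac{dt}{2\pi}=1$, we have $\sigma_Nf-f=\int_{-\pi}^{\pi}K_N(t)(R_tf-f)\,\frac{dt}{2\pi}$, and because for each arc $I$ the quantity $\bigl(\frac{1}{|I|^{\la}}\int_I|g-g_I|^2\,d\th\bigr)^{1/2}$ is an $L^2$-seminorm in $g$, Minkowski's integral inequality followed by the supremum over $I$ yields
$$\n{\sigma_Nf-f}_{2,\la}\le\int_{-\pi}^{\pi}K_N(t)\,\n{R_tf-f}_{2,\la}\,\frac{dt}{2\pi}.$$

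It then remains to feed in hypothesis $(2)$ through the usual approximate-identity estimate. Since $p_1$ is rotation invariant and rotation fixes the value at the origin, $\n{R_{-t}f-f}_{2,\la}=\n{R_tf-f}_{2,\la}$; hence $(2)$ gives $\n{R_tf-f}_{2,\la}\to0$ as $t\to0$ from both sides, while $\n{R_tf-f}_{2,\la}\le2\n{f}_{2,\la}$ uniformly. Splitting the last integral over $\{|t|<\delta\}$ and $\{|t|\ge\delta\}$ and using that the mass of $K_N$ concentrates at the origin shows $\n{\sigma_Nf-f}_{2,\la}\to0$, so $f$ is an $H^{2,\la}$-limit of polynomials and therefore lies in $H^{2,\la}_0$. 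The only delicate points are (i) passing the Morrey seminorm through the vector-valued Fejér integral by Minkowski's inequality, which is legitimate because each arc-seminorm $q_I(g)=\bigl(\frac{1}{|I|^{\la}}\int_I|g-g_I|^2\,d\th\bigr)^{1/2}$ is continuous on $L^2(\T)$ and $t\mapsto R_tf$ is continuous into $L^2(\T)$, and (ii) checking that $\sigma_Nf$ is literally the polynomial Cesàro mean of $f$. Both are routine once the Morrey seminorm is read arc by arc as an $L^2$-seminorm.
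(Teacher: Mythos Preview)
Your argument is correct, and the logical skeleton differs from the paper's. The paper runs the cycle $(1)\Rightarrow(2)\Rightarrow(3)\Rightarrow(1)$: for $(1)\Rightarrow(2)$ it gives a self-contained estimate, splitting over arcs $|I|<\delta$ (handled by the $H^{2,\la}_0$ hypothesis) and $|I|\ge\delta$ (handled by $L^2$-continuity of translation); for $(2)\Rightarrow(3)$ it writes $f-f_r$ via the Poisson integral of the rotates and pushes the seminorm through the $dt$-integral, exactly the Minkowski/Fubini step you also use; and $(3)\Rightarrow(1)$ is the same trivial observation you make. You instead invoke \cite[Corollary~1.3]{AndersonJovovicSmith} for both easy directions $(1)\Rightarrow(2),(3)$ (legitimate, since the paper itself quotes this inclusion just before the theorem), treat $(3)\Rightarrow(1)$ identically, and replace the paper's Poisson step by a Fej\'er step to go from $(2)$ straight to $(1)$. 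The essential analytic move---passing the Morrey seminorm under a convolution with an approximate identity via Minkowski's inequality---is the same in both proofs; the only genuine difference is the choice of kernel, which in your case lands on polynomials rather than dilates and so short-circuits the detour through $(3)$. What your route buys is that the hands-on small-arc/large-arc computation for $(1)\Rightarrow(2)$ is avoided entirely; what the paper's route buys is a self-contained argument that does not lean on the cited corollary.
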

\begin{proof}
Let $I\subset \T$ be an arc. An easy computation gives
\begin{equation}\label{id}
|f|_I^2-|f_I|^2=
\frac{1}{|I|}\int_I |f(e^{i\theta})-f_I|^2\,d\theta=
\frac{1}{2|I|^2}\int_I\int_I|f(e^{i\theta})-f(e^{i\omega})|^2\,d\theta d\omega
\end{equation}
(see  \cite[Theorem 9.24]{Zhu}), and in particular we have
$$
\frac{1}{|I|^{\la}}\int_I
|f(e^{i\theta})-f_I|^2\,d\theta=|I|^{1-\la}(|f|_I^2-|f_I|^2).
$$
$(1)\Rightarrow (2)$. Suppose $f\in H^{2, \la}_0$ and write
$F_t(z)=f(e^{it}z)-f(z)$. We need to show
$$
\lim_{t\to 0^{+}}\n{F_t}_{2, \la}=
\lim_{t\to 0^{+}}\sup_{I}\left(\frac{1}{|I|^{\la}}
\int_I|F_t(e^{i\theta})-(F_t)_I|^2\,d\theta\right)^{1/2}=0.
$$
Since $f\in H^{2, \la}_0$ we have
$\lim_{|I|\to 0}|I|^{\frac{1-\la}{2}}(|f|_I^2-|f_I|^2)^{1/2}=0$
 so for a given $\ev>0$ there is
$\delta\in (0,1)$ such that if  $|I|<\delta$ then $
|I|^{\frac{1-\la}{2}}(|f|_I^2-|f_I|^2)^{1/2}<\frac{\ev}{2}. $ Thus for an arc $I$
and its rotations $I_t=e^{it}I$  with  $|I|=|I_t|<\delta$ we obtain
\begin{align*}
 \left(\frac{1}{|I|^{\la}}\int_I|F_t(e^{i\theta})-(F_t)_I|^2\,d\theta\right)^{1/2}
&=|I|^{\frac{1-\la}{2}}(|F_t|_I^2-|(F_t)_I|^2)^{1/2}\\
&\leq |I|^{\frac{1-\la}{2}}(|f|_I^2-|f_I|^2)^{1/2}+
|I|^{\frac{1-\la}{2}}(|f|_{I_t}^2-|f_{I_t}|^2)^{1/2}\\
&=|I|^{\frac{1-\la}{2}}(|f|_I^2-|f_I|^2)^{1/2}+
|I_t|^{\frac{1-\la}{2}}(|f|_{I_t}^2-|f_{I_t}|^2)^{1/2}\\
&\leq \frac{\ev}{2}+\frac{\ev}{2}\\
&=\ev
\end{align*}
for all $t\geq 0$. On the other hand if $|I|>\delta$ then
\begin{align*}
|I|^{\frac{1-\la}{2}}(|F_t|_I^2-|(F_t)_I|^2)^{1/2}&\leq |I|^{\frac{1-\la}{2}}(|F_t|^2_I)^{1/2}\\
&=|I|^{\frac{1-\la}{2}}\left(\frac{1}{|I|}
\int_I|f(e^{i(t+\theta)})-f(e^{i\theta})|^2
\,d\theta\right)^{1/2}\\
&\leq \frac{1}{\delta^{\la/2}}\left(\int_{\T}|f(e^{i(t+\theta)})-f(e^{i\theta})|^2
\,d\theta\right)^{1/2}
\end{align*}
By the continuity of the integral,  for $f\in L^2(\T)$ there is a $\t>0$ such that if
$0\leq t<\t$,
$$
\left(\int_{\T}|f(e^{i(t+\theta)})-f(e^{i\theta})|^2
\,d\theta\right)^{1/2}<\delta^{\la/2}\ev.
$$
It follows that for $0\leq t<\t$,
$$
\sup_{I}\left(\frac{1}{|I|^{\la}}
\int_I|F_t(e^{i\theta})-(F_t)_I|^2\,d\theta\right)^{1/2}<\ev,
$$
and this shows that (1) implies (2).\\
$(2)\Rightarrow (3)$. Suppose (2) holds for an $f\in H^{2, \la}$.  For $0<r<1$
write $f_r(z)=f(rz)$.  We will show equivalently that $\lim_{r\to
1^{-}}\n{f_r-f}_{2,\la}=0$. By the Poisson integral formula we have
$$
f(e^{i\theta})-f(re^{i\theta})=\frac{1}{2\pi}\int_0^{2\pi}P_r(t)[f(e^{i\theta})- f(e^{i(\theta-t)})]\,dt
$$
and an application of Fubini's theorem gives for every small positive $\delta$,
\begin{align*}
\n{f-f_r}_{2, \la}&\leq \frac{1}{2\pi}\int_0^{2\pi}P_r(t)\n{f-f(e^{-it}z )}_{2, \la}\,dt\\
&=\frac{1}{2\pi} \int_{|t|<\delta}P_r(t)\n{f(e^{it}z )-f}_{2, \la}\,dt+
\frac{1}{2\pi}\int_{\delta\leq |t|<\pi}P_r(t)\n{f(e^{it}z )-f}_{2, \la}\,dt\\
&\leq \frac{1}{2\pi}\int_{|t|<\delta}P_r(t)\n{f(e^{it}z )-f}_{2, \la}\,dt+
\frac{2\n{f}_{2, \la}}{2\pi}\int_{\delta\leq |t|<\pi}P_r(t)\,dt,
\end{align*}
where we have taken into account that $\n{f(e^{it}z )}_{2, \la}=\n{f}_{2, \la}$
for any $f\in H^{2, \la}$ and $t$ real.

If $\ev>0$ is given, using the properties of the Poisson kernel and the assumption
that (2) holds,  the first integral can be made smaller than $\ev/2$ by choosing
$\delta$ small. Fix such a  $\delta$, then  the integral in the second term tends to
$0$ as $r\to 1^{-}$, thus the second term is also less than $\ev/2$ for $r$
sufficiently close to $1$.
This  shows that (2) implies (3).\\
$(3) \Rightarrow (1)$. This implication is obvious since each $f_r$ is analytic on a
larger disc of radius $1/r$ hence $f_r\in H^{2, \la}_0$, and $H^{2, \la}_0$ is
closed in $H^{2, \la}$.
\end{proof}

The above theorem says that for $\phi_t(z)=e^{it}z$ and $\psi_t(z)=e^{-t}z$,
$$
[\phi_t, H^{2, \la}]=[\psi_t, H^{2, \la}]=H^{2, \la}_0.
$$
There are however semigroups $(\phi_t)$   such that the  inclusion $H^{2,
\la}_0\subset [\phi_t, H^{2, \la}]$ is proper.  For example for
$\phi_t(z)=e^{-t}z+1-e^{-t}$  the function
$$
f_{\la}(z)=\frac{1}{(1-z)^{\frac{1-\la}{2}}}
$$
belongs to $ H^{2, \la}\setminus H^{2, \la}_0$, and satisfies
$$
f_{\la}(\phi_t(z))=e^{t\frac{1-\la}{2}} f_{\la}(z),
$$
so
$$
\n{f_{\la}\circ\phi_t -f_{\la}}_{2, \la}= (e^{t\frac{1-\la}{2}}-1)\n{ f_{\la}}_{2,\la} \to 0 \,\, \text{as} \,\, t\to 0.
$$
Thus $f_{\la}\in [\phi_t, H^{2, \la}]$ and  $H^{2,\la}_0\subsetneq [\phi_t, H^{2, \la}]$. Other examples of semigroups
for which $[\phi_t, H^{2, \la}]$ is strictly larger than $ H^{2, \la}_0$ can be constructed
easily. For example let $h(z)=(\frac{1+z}{1-z})^{\frac{1-\la}{2}}-1$   and
$\phi_t(z)=h^{-1}(e^{-t}h(z))$.  Then
$$
\n{h\circ\phi_t-h}_{2, \la}= (1-e^{-t})\n{h}_{2, \la}\to 0 \,\, \text{as} \,\, t\to 0,
$$
so $h\in [\phi_t, H^{2, \la}]$ while $h\notin H^{2, \la}_0$.

The following theorem gives a sufficient condition on the generator $G$ of a
semigroup which implies that $[\phi_t, H^{2, \la}]=H^{2, \la}_0$.

\begin{theorem}
Let $(\phi_t)$ be a semigroup of functions with generator $G$ and $0<\la<1$.
Assume that
$$
\lim_{|I|\to 0}\frac{1}{|I|}\int_{S(I)} \frac{1-|z|}{|G(z)|^2}\,dA(z)=0.
$$
Then $[\phi_t, H^{2, \la}]=H^{2, \la}_0$.
\end{theorem}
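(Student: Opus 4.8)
The plan is to read off the conclusion from the abstract description \eqref{max} of the maximal strong-continuity subspace together with the Carleson-box description \eqref{s3} of $H^{2,\la}_0$. Since $H^{\infty}\subset H^{2,\la}$, the lower inclusion $H^{2,\la}_0\subseteq[\phi_t,H^{2,\la}]$ is already available, so only the reverse inclusion is at stake. The composition operators $C_t$ are uniformly bounded for $t\in[0,1]$, because the operator-norm bound recorded in the previous section depends continuously on $\phi_t(0)$, which ranges over a compact subset of $\D$; and $H^{2,\la}$ contains the constants. Hence \eqref{max} applies and gives $[\phi_t,H^{2,\la}]=\overline{\{f\in H^{2,\la}:Gf'\in H^{2,\la}\}}$. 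As $H^{2,\la}_0$ is closed, it suffices to prove that every $f$ with $g:=Gf'\in H^{2,\la}$ already belongs to $H^{2,\la}_0$.

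Fixing such an $f$ and writing $f'=g/G$, the membership $f\in H^{2,\la}_0$ is, by \eqref{s3}, exactly the assertion
\[
\frac{1}{|I|^{\la}}\int_{S(I)}\frac{|g(z)|^2}{|G(z)|^2}\,(1-|z|)\,dA(z)\longrightarrow 0\qquad(|I|\to0).
\]
To eliminate $g$ I would use the growth estimate \eqref{growth} in the form $|g(z)|^2\le C\n{g}_{2,\la}^2(1-|z|)^{\la-1}$, which bounds the displayed quantity by $C\n{g}^2_{2,\la}$ times
\[
\frac{1}{|I|^{\la}}\int_{S(I)}\frac{(1-|z|)^{\la}}{|G(z)|^2}\,dA(z),
\]
so the theorem reduces to showing that this last expression tends to $0$ as $|I|\to0$.

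The hypothesis says precisely that $d\nu(z)=\frac{1-|z|}{|G(z)|^2}\,dA(z)$ is a vanishing $1$-Carleson measure, $\nu(S(I))=o(|I|)$, while the target is that $(1-|z|)^{\la-1}\,d\nu$ is a vanishing $\la$-Carleson measure. Thus everything is reduced to upgrading the weight exponent from $1$ to $\la$ in a vanishing Carleson condition. \textbf{This is the crux and the step I expect to resist a soft argument.} It is \emph{not} a formal consequence of the hypothesis: on $S(I)$ one has $\big((1-|z|)/|I|\big)^{\la}\ge (1-|z|)/|I|$, so the quantity to be estimated dominates $\nu(S(I))/|I|$ itself, and interpolation is unavailable because the natural second endpoint $\int_{S(I)}|G|^{-2}\,dA$ diverges. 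In fact the implication is genuinely false for arbitrary measures $\nu$: decomposing $S(I)$ by dyadic depth produces the factor $2^{n(1-\la)}$ on the $n$-th annulus, and the vanishing $1$-Carleson bound on sub-boxes only controls the $n$-th annular $\nu$-mass by $|I|\,\epsilon(2^{-n}|I|)$, so the resulting series $\sum_n 2^{n(1-\la)}\epsilon(2^{-n}|I|)$ can diverge when $\epsilon$ decays slowly.

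The way through must therefore use that $|G|^{-2}=|1/G|^2$ is the squared modulus of an analytic function, so the places where it is large near $\T$ are governed by the zeros of $G$ on $\overline{\D}$ rather than being freely distributed. I would first treat the model of a single boundary zero, $|G(z)|^2\approx|z-\z_0|^{2m}$: there the hypothesis forces $m<1$, and decomposing $S(I)$ by the \emph{distance to} $\z_0$ (instead of by distance to the circle) replaces the divergent factor $2^{n(1-\la)}$ by a convergent $2^{-n\la}$ and yields the sharp bound $\int_{S(I)}(1-|z|)^{\la}|G|^{-2}\,dA\approx|I|^{2+\la-2m}=o(|I|^{\la})$. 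For a general generator I would reduce to this picture by a stopping-time selection of the sub-boxes on which $\nu$ is comparatively large—these are forced by analyticity to cluster near the boundary zero set of $G$—and estimate the selected family through the uniform vanishing hypothesis. Organizing this selection and verifying that it sums to $o(|I|^{\la})$ uniformly in $I$ is, I expect, the main technical obstacle; the remainder is the routine bookkeeping of the reduction above.
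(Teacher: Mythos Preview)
Your reduction to showing that $f\in H^{2,\la}_0$ whenever $F:=Gf'\in H^{2,\la}$ is correct and matches the paper. The wrong turn comes at the next step: you apply the pointwise growth bound $|F(z)|^2\le C(1-|z|)^{\la-1}\n{F}_{2,\la}^2$ on all of $S(I)$ and thereby discard every feature of $F$ beyond its size. This leaves you with a purely measure-theoretic implication (``vanishing $1$-Carleson for $\nu$ implies vanishing $\la$-Carleson for $(1-|z|)^{\la-1}\,d\nu$'') which, as you correctly diagnose, is false for general measures; your proposed rescue via the zero structure of $G$ is not carried out, and generators $G(z)=(\bar b z-1)(z-b)P(z)$ with $\mathrm{Re}\,P\ge 0$ are far more varied near $\T$ than the local model $|z-\z_0|^m$ suggests.

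The paper avoids this dead end by \emph{not} discarding the $H^{2,\la}$ structure of $F$. Instead of bounding $|F(z)|$ pointwise, it subtracts the value at the ``center'' $a_I=(1-|I|)e^{i\th}$ of the box and splits
\[
|F(z)|^2\le 2|F(z)-F(a_I)|^2+2|F(a_I)|^2.
\]
The constant piece uses your growth bound only at the single point $a_I$, where $1-|a_I|=|I|$, so the prefactor $|I|^{-\la}$ becomes $|I|^{-1}$ and this term is exactly $C\n{F}_{2,\la}^2\cdot\frac{1}{|I|}\int_{S(I)}\frac{1-|z|}{|G(z)|^2}\,dA$, which vanishes by hypothesis. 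For the oscillation piece one inserts $|1-\bar a_I z|^{-2}\ge C|I|^{-2}$ on $S(I)$, applies the Carleson embedding $H^2\hookrightarrow L^2(d\mu_I)$ for the restricted measure $d\mu_I=\chi_{S(I)}\tfrac{1-|z|}{|G(z)|^2}\,dA$ to the $H^2$ function $z\mapsto (F(z)-F(a_I))/(1-\bar a_I z)$, and recognizes the resulting boundary integral as $(1-|a_I|)^{1-\la}\n{F\circ\phi_{a_I}-F(a_I)}_{H^2}^2\le p_2(F)^2$. Thus the oscillation term is bounded by $C\n{\mu_I}_*\n{F}_{2,\la}^2$, and an elementary comparison of $S(J)$ with $S(I)$ and $S(3I)$ shows $\n{\mu_I}_*\to 0$ as $|I|\to 0$ directly from the hypothesis. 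No analysis of the zeros of $G$ is needed; the work is done by the seminorm $p_2$, which you gave up when you passed to the pointwise growth estimate.
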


\begin{proof}
Since $[\phi_t, H^{2, \la}]=\overline{\{f\in H^{2, \la}: Gf'\in H^{2, \la}\}}$ it
suffices to show that
$$
\{f\in H^{2, \la}: Gf'\in H^{2, \la}\}\subset H^{2, \la}_0
$$
Let $g\in H^{2, \la}$ such that $Gg'\in H^{2, \la}$. We will show that $g\in H^{2,\la}_0$ by showing that
$$
\lim_{|I|\to 0}\frac{1}{|I|^{\la}}\int_{S(I)} |g'(z)|^2(1-|z|)\,dA(z)=0
$$
For an arc $I$ on the circle with center $e^{i\th}$ let $a_I=(1-|I|)e^{i\th}\in\D$.
Writing $F(z)=G(z)g'(z)$ we have
\begin{align*}
\frac{1}{|I|^{\la}}&\int_{S(I)} |g'(z)|^2(1-|z|)\,dA(z)=\frac{1}{|I|^{\la}}\int_{S(I)} |G(z)g'(z)|^2\frac{1-|z|}{|G(z)|^2}\,dA(z)\\
&=\frac{1}{|I|^{\la}}\int_{S(I)} |F(z)|^2\frac{1-|z|}{|G(z)|^2}\,dA(z)\\
&\leq\frac{2}{|I|^{\la}}\int_{S(I)} |F(z)-F(a_I)|^2\frac{1-|z|}{|G(z)|^2}\,dA(z)+
\frac{2}{|I|^{\la}}\int_{S(I)} |F(a_I)|^2\frac{1-|z|}{|G(z)|^2}\,dA(z)\\
&\leq \frac{2}{|I|^{\la}}\int_{S(I)} |F(z)-F(a_I)|^2\frac{1-|z|}{|G(z)|^2}\,dA(z)
+ \frac{C\n{F}_{2,\la}^2}{(1-|a_I|)^{1-\la}}\frac{2}{|I|^{\la}}\int_{S(I)} \frac{1-|z|}{|G(z)|^2}\,dA(z)\\
&\leq \frac{2}{|I|^{\la}}\int_{S(I)} |F(z)-F(a_I)|^2\frac{1-|z|}{|G(z)|^2}\,dA(z)+C'\n{F}_{2,\la}^2
\frac{1}{|I|}\int_{S(I)} \frac{1-|z|}{|G(z)|^2}\,dA(z)\\
&=A_I+B_I,
\end{align*}
where we have used the growth inequality for $F\in H^{2,\la}$ and the fact that
$1-|a_I|=|I|$. By the hypothesis the second term $B_I$ tends to $0$ as $|I|\to 0$.
To conclude the proof it suffices to prove that  $A_I\to 0$ as $|I|\to 0$.

To prove that $\lim_{|I|\to 0}A_I=0$ recall first that there is an absolute constant
$C$ such that if  $I\subset \T$ is an arc then
$$
\frac{1-|a_I|}{|1-\overline{a_I}z|^2}\geq\frac{C}{|I|}
$$
for all $z\in S(I)$.  Next fixing for the moment an arc  $I$, let $\m$ be the measure
on $\D$ defined by
$$
\m(E)=\m_I(E)=\int_{S(I)\cap E}\frac{1-|z|}{|G(z)|^2}\,dA(z)
$$
for each Borel subset $E$ of $\D$. We will suppress the index $I$ in $\m_I$ until later.
From the hypothesis it follows that $\m$ is a
Carleson measure, i.e. for each $f\in H^2$
$$
\int_{\D}|f(z)|^2\,d\m(z)\leq C\int_{\T}|f(e^{i\th})|^2\,d\th,
$$
with the constant $C$ not depending on $f$, and $C$ is comparable to
$$
\n{\m}_{\ast}=\sup_{J\subset\T}\frac{\m(S(J))}{|J|}.
$$
We have then
\begin{align*}
A_I&=\frac{2}{|I|^{\la}}\int_{S(I)} |F(z)-F(a_I)|^2\frac{1-|z|}{|G(z)|^2}\,dA(z)\\
&\leq C'\frac{|I|^2}{|I|^{\la}}\int_{S(I)}\left|\frac{F(z)-F(a_I)}{1-\overline{a_I}z}\right|^2\frac{1-|z|}{|G(z)|^2}\,dA(z)\\
&\leq C'|I|^{2-\la}\int_{\D}\left|\frac{F(z)-F(a_I)}{1-\overline{a_I}z}\right|^2\,d\m(z)\\
&\leq C'\n{\m}_{\ast}|I|^{2-\la}\int_{\T}\left|\frac{F(e^{i\th})-F(a_I)}{1-\overline{a_I}e^{i\th}}\right|^2\,d\th\\
&=C'\n{\m}_{\ast}(1-|a_I|)^{1-\la}\int_{\T}|F(e^{i\th})-F(a_I)|^2\frac{1-|a_I|}{|1-\overline{a_I}e^{i\th}|^2}\,d\th\\
&\leq C'\n{\m}_{\ast}\n{F}_{2,\la}^2
\end{align*}
 It remains to show that $\n{\m}_{\ast}=\n{\m_I}_{\ast}\to 0 $ as $|I|\to 0$. For arcs  $J\subset\T$ we have
 $\mu_I(S(J))=\m_I((S(J)\cap S(I))$ so we need only consider arcs $J$ that intersect $I$. Let  $J$ be  such an arc.
 If $|J|>|I|$ we have
 \begin{align*}
\frac{\m_I(S(J))}{|J|}&=\frac{\m_I((S(J)\cap S(I))}{|J|}\leq \frac{\m_I(S(I))}{|J|}\\
&\leq \frac{\m_I(S(I))}{|I|}=\frac{1}{|I|} \int_{S(I)} \frac{1-|z|}{|G(z)|^2}\,dA(z) \to 0
 \end{align*}
as $|I|\to 0$, so  $\lim_{|I|\to 0}\n{\m_I}_{\ast}=0$. If $|J|\leq |I|$ then
$J\subset 3I$ where $3I$ is the arc with same center as $I$ and length $3|I|$. Thus
\begin{align*}
\n{\m_I}_{\ast}=\sup_{J\subset\T}\frac{\m_I(S(J))}{|J|}\leq
\sup_{J\subset 3I}\frac{1}{|J|}\int_{S(J)}\frac{1-|z|}{|G(z)|^2}\,dA(z).
\end{align*}
But if  $|I|\to 0$ then $|J|\leq 3|I|\to 0$ and then $\frac{1}{|J|}\int_{S(J)}\frac{1-|z|}{|G(z)|^2}\,dA(z)\to 0$. It follows that
$\n{\m_I}_{\ast} \to 0$ as $|I|\to 0$, therefore $\lim_{|I|\to 0}A_I=0$ and the proof is finished.
\end{proof}

As a corollary we obtain an analogue of \cite[Theorem 3.1]{BCDMS}.

\begin{corollary}
Let $(\phi_t)$ be a semigroup of functions with generator $G$ and $0<\la<1$. Assume that for some $\a$ with $0<\a< 1/2$,
$$
\frac{(1-|z|)^{\a}}{G(z)}=\og(1), \quad \text{as}\,\, |z|\to 1.
$$
Then $[\phi_t, H^{2, \la}]=H^{2, \la}_0$.
\end{corollary}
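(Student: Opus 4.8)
The plan is to deduce the corollary directly from the preceding theorem by verifying its Carleson-type hypothesis through a pointwise bound on the integrand $\frac{1-|z|}{|G(z)|^2}$. The assumption $\frac{(1-|z|)^{\a}}{G(z)}=\og(1)$ as $|z|\to 1$ means precisely that there are constants $M>0$ and $r_0\in(0,1)$ with $|G(z)|\geq M^{-1}(1-|z|)^{\a}$ whenever $r_0<|z|<1$. Squaring and rearranging, I would obtain
$$
\frac{1-|z|}{|G(z)|^2}\leq M^2\,(1-|z|)^{1-2\a},\qquad r_0<|z|<1,
$$
and, since $0<\a<1/2$ forces $1-2\a>0$, the right-hand side is a strictly positive power of $1-|z|$ that vanishes as $|z|\to 1$.

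Next I would estimate the averaged integral over a Carleson box. For $|I|$ small enough that $1-|I|>r_0$, the box $S(I)$ lies entirely in the region where the pointwise bound is valid, so writing $z=\rho e^{i\th}$ and noting that the angular extent of $S(I)$ is comparable to $|I|$ while $1-\rho$ ranges over $(0,|I|]$, one finds that $\int_{S(I)}(1-|z|)^{1-2\a}\,dA(z)$ is comparable to $|I|\cdot|I|^{2-2\a}=|I|^{3-2\a}$, the radial integral $\int_{1-|I|}^{1}(1-\rho)^{1-2\a}\,d\rho$ contributing the factor $|I|^{2-2\a}/(2-2\a)$, which is finite precisely because $1-2\a>-1$. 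Consequently
$$
\frac{1}{|I|}\int_{S(I)}\frac{1-|z|}{|G(z)|^2}\,dA(z)\leq \frac{M^2}{|I|}\int_{S(I)}(1-|z|)^{1-2\a}\,dA(z)\leq C\,|I|^{2-2\a}.
$$

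Since $2-2\a>0$, the last bound tends to $0$ as $|I|\to 0$, so the hypothesis
$$
\lim_{|I|\to 0}\frac{1}{|I|}\int_{S(I)}\frac{1-|z|}{|G(z)|^2}\,dA(z)=0
$$
of the previous theorem holds, and the conclusion $[\phi_t, H^{2,\la}]=H^{2,\la}_0$ follows at once. I do not expect any genuine obstacle here: the corollary is essentially a convenient sufficient-condition repackaging of the theorem, and the only point that really needs attention is confirming that the exponent $1-2\a$ lands in the range where both the radial integral converges and the resulting power $|I|^{2-2\a}$ decays to $0$, which is exactly what the constraint $0<\a<1/2$ guarantees.
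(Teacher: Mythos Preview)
Your proposal is correct and follows essentially the same route as the paper: translate the $\og(1)$ assumption into the pointwise bound $\frac{1-|z|}{|G(z)|^2}\leq C(1-|z|)^{1-2\a}$ near the boundary, integrate over $S(I)$ to get the estimate $C'|I|^{2-2\a}$, and invoke the preceding theorem. The only difference is cosmetic: you spell out the radial and angular integration more explicitly than the paper does.
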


\begin{proof}
Under the hypothesis, $\frac{(1-|z|)^{2\a}}{|G(z)|^2}\leq C<\infty$ for all $z\in
\D$ with $|z|\geq 1/2$. Then
$$
\frac{1}{|I|}\int_{S(I)} \frac{1-|z|}{|G(z)|^2}\,dA(z)
\leq C\frac{1}{|I|}\int_{S(I)} (1-|z|)^{1-2\a}\,dA(z)\leq C'|I|^{2-2\a}
$$
so that $\lim_{|I|\to 0}\frac{1}{|I|}\int_{S(I)} \frac{1-|z|}{|G(z)|^2}\,dA(z)=0$
and the conclusion follows from the previous theorem.
\end{proof}

The following is a partial converse of the above corollary for semigroups with
Denjoy-Wolff point inside the disc.

\begin{theorem}
Let $(\phi_t)$ be a semigroup with infinitesimal generator $G$ and Denjoy-Wolff
$b\in \D$. If for  some $\la\in (0,1)$ we have $[\phi_t, H^{2, \la}]=H^{2, \la}_0$,
then
$$
\lim_{|z|\to 1}\frac{(1-|z|)^{\frac{3-\la}{2}}}{G(z)}=0.
$$
\end{theorem}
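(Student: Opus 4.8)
The plan is to argue by contradiction, using the description of the maximal subspace recalled before the proof. Since $b\in\D$, the space $H^{2,\la}$ contains the constants, so $[\phi_t, H^{2,\la}] = \overline{\{f\in H^{2,\la}: Gf'\in H^{2,\la}\}}$, and because $H^{2,\la}_0\subseteq[\phi_t,H^{2,\la}]$ always holds, the hypothesis $[\phi_t,H^{2,\la}]=H^{2,\la}_0$ is equivalent to the inclusion $\{f\in H^{2,\la}: Gf'\in H^{2,\la}\}\subseteq H^{2,\la}_0$. So it suffices to show that if the stated limit fails, then there is a function $f\in H^{2,\la}\setminus H^{2,\la}_0$ with $Gf'\in H^{2,\la}$. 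Negating the conclusion produces $\ev_0>0$ and a sequence $z_n$ with $|z_n|\to 1$ and $|G(z_n)|\leq \ev_0^{-1}(1-|z_n|)^{\frac{3-\la}{2}}$; these are the points where $G$ is anomalously small, and the whole construction is localized there.

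The key preliminary step is to spread the smallness of $G$ from the points $z_n$ to whole Whitney boxes. Since $b\in\D$ we may write $G(z)=(\bar b z-1)(z-b)P(z)$ with $\text{Re}(P)\geq 0$, and near $\partial\D$ the polynomial factor is bounded above and below, so $|G(z)|\asymp|P(z)|$. Because $P$ has non-negative real part, $|P'/P|\leq 2/(1-|z|^2)$, i.e. $\log P$ is a Bloch function, so $\log|P|$ oscillates by a bounded amount over any hyperbolic ball of fixed radius. Consequently $|G(z)|\leq C(1-|z_n|)^{\frac{3-\la}{2}}$ holds not merely at $z_n$ but throughout the top part $T(I_n)$ of the Carleson box $S(I_n)$, where $I_n$ is the arc centred at $z_n/|z_n|$ with $|I_n|=1-|z_n|$. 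After passing to a subsequence I arrange that the $z_n$ are hyperbolically separated and that $|I_{n+1}|$ decreases geometrically, so the boxes form a sparse family.

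The test function is built through its derivative. I take $f'=\sum_n a_n k_n$, where $k_n(z)=(1-|z_n|)^{-\frac{3-\la}{2}}\big((1-|z_n|^2)/(1-\bar z_n z)\big)^{N}$ is an analytic bump of height $\asymp(1-|z_n|)^{-\frac{3-\la}{2}}$ on $T(I_n)$ with rapid off-box decay for $N$ large, and the $a_n$ are bounded above and below. Using the seminorm $p_3$ I must verify three things simultaneously. First, $f\notin H^{2,\la}_0$: on $T(I_n)$ one has $|f'|^2(1-|z|)\gtrsim(1-|z_n|)^{\la-2}$ on a set of area $\asymp|I_n|^2$, so $|I_n|^{-\la}\int_{S(I_n)}|f'|^2(1-|z|)\,dA\gtrsim 1$, using only the shape of $k_n$. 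Second, $f\in H^{2,\la}$: the normalisation of $k_n$ together with the sparseness make $|f'|^2(1-|z|)\,dA$ a $\la$-Carleson measure. Third, and most importantly, $Gf'\in H^{2,\la}$: on the core $T(I_n)$ the bound $|G|\lesssim(1-|z_n|)^{\frac{3-\la}{2}}$ gives $Gk_n=\og(1)$, so each term contributes a bounded bump, while for $N$ large the tails $|G k_n|$ off $T(I_n)$, estimated by $|G(z)|\lesssim 1/(1-|z|)$, are summable over the sparse family. Granting these, $f$ contradicts the inclusion above.

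The main obstacle is this third estimate, namely keeping $Gf'$ in $H^{2,\la}$ globally. The smallness of $G$ is only guaranteed on the Whitney cores $T(I_n)$; in the deep parts of the boxes and between the boxes $1/|G|$ may be large, so control of $Gf'$ there has to come entirely from the decay of the bumps and from the separation of the $z_n$. Balancing the exponent $N$, the sizes $a_n$, and the separation so that $f\notin H^{2,\la}_0$ survives while both $f$ and $Gf'$ stay in $H^{2,\la}$ is the delicate, computational heart of the argument, and the spreading lemma for $G$ is the conceptual device that makes it possible.
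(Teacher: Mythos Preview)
Your plan heads in a very different, and much harder, direction than the paper. The paper does not argue by contradiction and builds no bump series at all. With $b=0$ one has $G(z)=-zP(z)$, $\text{Re}\,P\ge 0$, and a single explicit function does all the work:
\[
\psi(z)=\int_0^z\frac{\zeta}{G(\zeta)}\,d\zeta=-\int_0^z\frac{d\zeta}{P(\zeta)}.
\]
Since $1/P$ has nonnegative real part, $\psi\in BMOA\subset H^{2,\la}$, while $G\psi'=z$ is a polynomial, hence also in $H^{2,\la}$. By the description (\ref{max}) of the maximal subspace this forces $\psi\in[\phi_t,H^{2,\la}]=H^{2,\la}_0$. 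The conclusion is then read off directly from the seminorm $p_2$: with $\phi_a(z)=(a-z)/(1-\bar a z)$,
\[
\frac{|a|(1-|a|^2)^{\frac{3-\la}{2}}}{|G(a)|}=(1-|a|^2)^{\frac{1-\la}{2}}\bigl|(\psi\circ\phi_a)'(0)\bigr|\le (1-|a|^2)^{\frac{1-\la}{2}}\|\psi\circ\phi_a\|_{H^2},
\]
and the right side tends to $0$ as $|a|\to 1$ because $\psi\in H^{2,\la}_0$ (split $\psi\circ\phi_a$ into $\psi(a)$ and $\psi\circ\phi_a-\psi(a)$). The entire proof is a few lines; the point is that instead of building a bad $f$ one exhibits a specific $f=\psi$ that the hypothesis forces into $H^{2,\la}_0$, and the desired limit is precisely what $\psi\in H^{2,\la}_0$ says.

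Your scheme tries instead to manufacture an $f\in H^{2,\la}\setminus H^{2,\la}_0$ with $Gf'\in H^{2,\la}$ from a lacunary series of reproducing-kernel bumps localized where $G$ is small. The spreading step via the Bloch property of $\log P$ is sound, and such constructions can sometimes be pushed through, but as you yourself say the ``delicate, computational heart'' is not carried out here, so this is a programme rather than a proof. One concrete gap: to place $Gf'$ in $H^{2,\la}$ via $p_3$ you must control $(Gf')'=G'f'+Gf''$ on every Carleson box, not merely the size of $Gk_n$ on the Whitney cores; your sketch only addresses the latter. Even if the estimates all close, the payoff is exactly the statement the paper obtains in a paragraph with the antiderivative $\psi$, so the bump machinery is unnecessary.
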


\begin{proof}
Without loss of generality we may assume $b=0$. The  generator then is
$G(z)=-zP(z)$ where Re$(P)\geq 0$ on $\D$. We assume that  $P$ is nonconstant
(for $P$ constant the assertion is clear). Let
$$
\p(z)=\int_0^z\frac{\z}{G(\z)}\,d\z=-\int_0^z\frac{1}{P(\z)}\,d\z.
$$
Since Re$(1/P)\geq 0$ the function $\p(z)$ belongs to BMOA
and thus also to $H^{2,\la}$. In addition,
$$
G(z)\p'(z)= z,
$$
a polynomial of degree $1$ which belongs to $ H^{2,\la}$. Since both $\p$ and $G\p'$ belong to
$H^{2, \la}$ it follows from  (\ref{max}) that
$\p\in [\phi_t, H^{2, \la}]$ and by the hypothesis   $\p \in H^{2, \la}_0$.

Now let $\phi_a(z)=\frac{a-z}{1-\bar{a}z}$ with $a\in \D$. Then $(\p\circ\phi_a)'(0)=\p'(a)(|a|^2-1)$ and we have
\begin{align*}
\frac{|a|(1-|a|^2)^{\frac{3-\la}{2}}}{|G(a)|}&=
(1-|a|^2)^{\frac{3-\la}{2}}|\p'(a)|=(1-|a|^2)^{\frac{1-\la}{2}}|(\p\circ\phi_a)'(0)|\\
&\leq (1-|a|^2)^{\frac{1-\la}{2}}\n{\p\circ\phi_a}_{H^2}\\
&\leq (1-|a|^2)^{\frac{1-\la}{2}}|\p(a)|+ (1-|a|^2)^{\frac{1-\la}{2}}\n{\p\circ\phi_a-\p(a)}_{H^2}.
\end{align*}
But since $\p\in H^{2, \la}_0$, we have
$\lim_{|a|\to 1}(1-|a|^2)^{\frac{1-\la}{2}}\n{\p\circ\phi_a-\p(a)}_{H^2}=0$ and
also $\lim_{|a|\to 1}(1-|a|^2)^{\frac{1-\la}{2}}|\p(a)|=0 $, so the conclusion follows.
\end{proof}

\subsection{Strong continuity on the whole space} As we have mentioned earlier, for every  nontrivial
semigroup $(\phi_t)$ the maximal space of strong continuity $[\phi_t, X]$ is
a proper subspace of $X$ when $X=H^{\infty}$ or $X=\cb$,  the Bloch space. A proof of this uses the fact
that each space is a Grothendieck
space and has the Dunford-Pettis property. H. Lotz \cite{Lotz} has proved that if $X$ is a Banach space with these two
properties then  every strongly continuous operator semigroup is automatically continuous in the uniform operator topology of
$X$. This means that the infinitesimal generator is a bounded operator on $X$. In the case of composition semigroups the
infinitesimal generator is a differential operator, and as such it is not bounded on $H^{\infty}$ or on $\cb$ unless it is the zero
operator. The same phenomenon appears on   all generalized Bloch spaces $\cb^{\a}$, $\a>0$, which are defined by
$$
\cb^{\a}=\{f: \sup_{z \in \D}(1-|z|^2)^{\a}|f'(z)|<\infty\},
$$
with norm $\n{f}_{\cb^{\a}}=|f(0)|+\sup_{z \in \D}(1-|z|^2)^{\a}|f'(z)|$. Each such space is a Grothendieck spaces with
the Dunford-Pettis property see \cite{BCDMPS}, so the theorem of Lotz applies.

The space BMOA does not have the geometric Banach space properties mentioned above. Nevertheless $[\phi_t, BMOA]$ is also
a proper subspace of BMOA for every nontrivial $(\phi_t)$. This was proved in \cite{AndersonJovovicSmith} in a more general
form:\\

\textbf{Theorem}(\cite[Theorem 1.1]{AndersonJovovicSmith}) \textit{Let  $X$ be a Banach space of analytic functions on $\D$ such that
$H^{\infty}\subseteq X\subseteq \cb$. Then for every  nontrivial $(\phi_t)$, $[\phi_t, X]\subsetneq X$}.\\

Note that this theorem gives a new function theoretic proof for $H^{\infty}$ and $\cb$. The proof
 consists of finding  a suitable test function $f$ in $
H^{\infty}$ (interpolating Blaschke product with zeros along a radius) such that $\limsup_{r\to 1^{-}}|f'(r)|(1-r)>0$ and $f$
extends to be analytic at a neighborhood of all boundary points  $\z\in \T\setminus\{1\}$. This then is used, together with the
boundary behavior of the associated univalent function of $(\phi_t)$, analyzed with the aid of the theory of prime ends, to show
that the Bloch norm of $f\circ\phi_t-f$ stays away from zero as  $t$ approaches $0$. These arguments can be adapted to the
case of Morrey spaces, with different test function, to prove an analogous result.

\begin{theorem}
Let $\la\in (0, 1)$ and let $X$ be  a Banach space of analytic functions on $\D$ such that $H^{2,\la}\subseteq X\subseteq
\cb^{\frac{3-\la}{2}}$. Then for every nontrivial semigroup  $(\phi_t)$ of functions we have  $ [\phi_t, X]\subsetneq X$. In
particular $ [\phi_t, H^{2, \la}]\subsetneq H^{2, \la}$.
\end{theorem}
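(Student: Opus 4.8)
The plan is to adapt the argument of \cite{AndersonJovovicSmith}, replacing the interpolating Blaschke product by the extremal Morrey function from (\ref{growth}). Since $H^{2,\la}$, $X$ and $\cb^{\frac{3-\la}{2}}$ are Banach spaces of analytic functions with bounded point evaluations, the inclusions $H^{2,\la}\subseteq X\subseteq \cb^{\frac{3-\la}{2}}$ are continuous, so there is a constant $C$ with
$$
\sup_{z\in \D}(1-|z|^2)^{\frac{3-\la}{2}}|g'(z)|\leq \n{g}_{\cb^{\frac{3-\la}{2}}}\leq C\n{g}_X
$$
for every $g\in X$. Hence it suffices to exhibit one $f\in H^{2,\la}\subseteq X$ together with sequences $t_n\to 0^{+}$ and $z_n\in\D$ for which
$$
(1-|z_n|^2)^{\frac{3-\la}{2}}\bigl|(f\circ\phi_{t_n})'(z_n)-f'(z_n)\bigr|\geq \delta>0 ;
$$
this forces $\limsup_{t\to 0^{+}}\n{f\circ\phi_t-f}_X>0$, so $f\notin [\phi_t,X]$ and $[\phi_t,X]\subsetneq X$. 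The final assertion is the case $X=H^{2,\la}$.

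For the test function I would fix a boundary point $\s\in\T$, $\s\neq b$ (where $b$ is the Denjoy--Wolff point), at which the generator $G=(\overline{b}z-1)(z-b)P$ has a finite nonzero radial limit $g_0:=G(\s)$; since $P$ has positive real part it lies in the Nevanlinna class and is nonvanishing, hence has finite nonzero radial limits almost everywhere, so such $\s$ exist (and when $b\in\D$ every such $\s$ qualifies). I would then take the rotated extremal function
$$
f(z)=(1-\overline{\s}z)^{-\frac{1-\la}{2}},
$$
which lies in $H^{2,\la}\subseteq X$ by rotation invariance of the seminorms, is analytic across $\T\setminus\{\s\}$, and satisfies $f'(z)=\tfrac{1-\la}{2}\,\overline{\s}\,(1-\overline{\s}z)^{-\frac{3-\la}{2}}$, so that $(1-|z|^2)^{\frac{3-\la}{2}}|f'(z)|\to 2^{\frac{3-\la}{2}}\tfrac{1-\la}{2}$ as $z\to\s$ radially. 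Using (\ref{gener}) in the form $\phi_t'(z)=G(\phi_t(z))/G(z)$ I would record the exact identity
$$
(f\circ\phi_t-f)'(z)=\frac{f'(\phi_t(z))G(\phi_t(z))-f'(z)G(z)}{G(z)}=\frac{\Psi(\phi_t(z))-\Psi(z)}{G(z)},\qquad \Psi:=f'G,
$$
valid near $\s$ since the only zero of $G$ is at $b\neq\s$. This turns the problem into estimating an increment of $\Psi$ along a short orbit, divided by $G$.

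The core is then a coupling of space and time. Take $z_n=r_n\s\to\s$ along the radius and $t_n=c(1-r_n)$ for a small constant $c>0$ to be fixed. Because $\s\neq b$ the orbit moves away from $\s$, and the short-time expansion $\phi_{t_n}(z_n)=z_n+t_nG(z_n)+\ol(t_n)$ together with $G(z_n)\to g_0$ gives
$$
1-\overline{\s}\phi_{t_n}(z_n)=(1-r_n)\bigl(1-c\,\overline{\s}g_0\bigr)+\ol(1-r_n).
$$
Substituting $\Psi(w)\approx \tfrac{1-\la}{2}\overline{\s}g_0(1-\overline{\s}w)^{-\frac{3-\la}{2}}$ for $w$ near $\s$ into the identity and multiplying by $(1-|z_n|^2)^{\frac{3-\la}{2}}\approx (2(1-r_n))^{\frac{3-\la}{2}}$ collapses everything to
$$
(1-|z_n|^2)^{\frac{3-\la}{2}}(f\circ\phi_{t_n}-f)'(z_n)\longrightarrow \tfrac{1-\la}{2}\,2^{\frac{3-\la}{2}}\,\overline{\s}\bigl(\kappa^{\frac{3-\la}{2}}-1\bigr),\qquad \kappa=\frac{1}{1-c\,\overline{\s}g_0}.
$$
For $c>0$ one has $\kappa\neq 1$ (otherwise $\phi_{t_n}(z_n)=z_n$, impossible for a nontrivial semigroup), and all quantities stay in the half plane $\{\text{Re}(1-\overline{\s}w)>0\}$ where the principal branch is used, so the limit is a nonzero constant: the required $\delta>0$. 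The explicit semigroup $\phi_t(z)=e^{-t}z+1-e^{-t}$ with $\s=-1$ realizes exactly this computation.

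The \emph{main obstacle} is the justification of the short-time boundary expansion: the estimate $\phi_{t_n}(z_n)=z_n+t_nG(z_n)+\ol(t_n)$ and the replacement $G(\phi_s(z_n))\approx g_0$ for $0\leq s\leq t_n$ must be made uniform as $z_n\to\s$ while $t_n\sim 1-|z_n|\to 0$, which requires that the orbit $\{\phi_s(z_n):0\leq s\leq t_n\}$ approach $\s$ nontangentially and that $G$ have a genuine nontangential limit $g_0$ there. For a general semigroup this is governed by the boundary behaviour of the Koenigs map $h$, with $\Omega=h(\D)$ close-to-convex when $b\in\partial\D$ and spirallike when $b\in\D$, and is controlled by the theory of prime ends exactly as in \cite{AndersonJovovicSmith}; the two cases $b\in\D$ and $b\in\partial\D$ must be treated separately, since the conjugated dynamics (dilations versus translations) and the accessible boundary structure differ. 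Everything else---the reduction to the $\cb^{\frac{3-\la}{2}}$ seminorm, the membership $f\in H^{2,\la}$, the identity for $(f\circ\phi_t-f)'$, and the final limit computation---is routine once $\s$ and the sequences $(z_n,t_n)$ have been fixed.
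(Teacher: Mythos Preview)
Your proposal and the paper use the same test function $f_\zeta(z)=(1-\overline{\zeta}z)^{-(1-\la)/2}$, but the strategies diverge at the crucial step. The paper \emph{decouples} the two limits: for each fixed $t>0$ it produces, via the Koenigs map $h$ and prime-end theory, a point $\zeta\in\T$ for which the radial limit $\lim_{r\to 1^-}\phi_t(r\zeta)$ lies in $\D$ (or, in the boundary automorphism case, on $\T\setminus\{\zeta\}$). Since $\phi_t$ is bounded univalent, $|\phi_t'(r\zeta)|(1-r)\to 0$, while $f_\zeta'(\phi_t(r\zeta))$ stays bounded; hence the composition term in $(f_\zeta\circ\phi_t-f_\zeta)'(r\zeta)(1-r)^{(3-\la)/2}$ vanishes in the limit and only $|f_\zeta'(r\zeta)|(1-r)^{(3-\la)/2}\to\tfrac{1-\la}{2}$ survives. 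No short-time expansion, no uniformity in $t$, no orbit control is required. By contrast, you \emph{couple} $t_n\sim 1-r_n$ and rely on a first-order expansion of $\phi_{t_n}(z_n)$ that must be justified uniformly as $z_n\to\s$; this is considerably more delicate, and what it buys you is only a sequence $t_n\to 0$ rather than the lower bound for all small $t$ that the paper obtains.

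There is also a gap in how you propose to close your ``main obstacle.'' You appeal to the theory of prime ends ``exactly as in \cite{AndersonJovovicSmith},'' but that reference (and the present paper) uses prime ends for a different purpose: to locate a boundary point $\zeta$ at which $h$ has a radial limit $w\in\partial\O$, so that $\phi_t(r\zeta)\to h^{-1}(e^{-ct}w)\in\D$ for fixed $t$. This says nothing about short orbits $\{\phi_s(z_n):0\le s\le t_n\}$ staying in a Stolz angle at $\s$. What your approach actually needs is (i) that $G$ have a finite nonzero \emph{nontangential} limit $g_0$ at $\s$ --- which does hold almost everywhere, since $P$ has positive real part and hence lies in every $H^p$, $p<1$ --- and (ii) an ODE bootstrap: as long as the orbit stays in a Stolz cone $\Gamma_M$ near $\s$ one has $|G-g_0|<\epsilon$, hence $|\phi_s(z_n)-z_n|\le s(|g_0|+\epsilon)$ and $1-|\phi_s(z_n)|\ge(1-r_n)-s(|g_0|+\epsilon)$; for $c<1/(2|g_0|)$ this keeps $\phi_s(z_n)$ inside a fixed $\Gamma_M$ for all $0\le s\le t_n$, closing the loop. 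Once this is in place your limit computation is correct, and the case split $b\in\D$ versus $b\in\partial\D$ becomes unnecessary. So your route can be made to work, but not via the mechanism you cite; the paper's decoupled argument avoids the issue entirely.
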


\begin{proof}
The proof is based on the following claim (see also  Theorem 3.1 in \cite{AndersonJovovicSmith}).\\
\textbf{Claim} For every nontrivial $(\phi_t)$ there is a function $f\in H^{2,\la}$ such that
$$
\liminf_{t\to
0}\n{f\circ\phi_t-f}_{\cb^{\frac{3-\la}{2}}}\geq 1.
$$
\textit{Proof of the Claim}. Without loss of generality we may assume that the Denjoy-Wolff point $b$ is either $0$ or $1$.
Take the case $b=0$ first. Then the semigroup is
$$
\phi_t(z)=h^{-1}(e^{-ct}h(z)
$$
 where $h$ is a univalent spirallike function
mapping $\D$ onto $\O=h(\D)$ with $h(0)=0$ and Re$(c)\geq 0$.

If Re$(c)= 0$ then $\phi_t(z)$ are rotations and have the form $\phi_t(z) =e^{iat}z$ with $a\in \R-\{0\}$. Let
$$
f(z)= \frac{1}{(1-z)^{\frac{1-\la}{2}}}
$$
then $f\in H^{2, \la}$ and
$$
\lim_{r\to 1^{-}}|f'(re^{i\th})|(1-r)^{\frac{3-\la}{2}}=
\begin{cases} \frac{1-\la}{2}>0, \,\, \text{when}\, \th=0,\\
0, \qquad \quad \, \text{when} \, 0<\th< 2\pi.
\end{cases}
$$
Now for   $0<t<2\pi/|a|$ we have
\begin{align*}
\n{f\circ\phi_t-f}_{\cb^{\frac{3-\la}{2}}}&\geq \sup_{0<r<1}|f'(re^{iat})e^{iat}-f'(r)|(1-r)^{\frac{3-\la}{2}}\\
&\geq \limsup_{r\to 1^{-}}|f'(re^{iat})e^{iat}-f'(r)|(1-r)^{\frac{3-\la}{2}}\\
&\geq \frac{1-\la}{2}.
\end{align*}

 Consider now the case  Re$(c)>0$. As in   Theorem 3.1 of \cite{AndersonJovovicSmith} we can choose a point
$w\in
\partial\O$ such that
$$
|w|=\text{dist}(0,  \partial\O).
$$
Then $[0, w)\subset \O$ and we can view the segment $[0, w)$ as a curve $\g(s)=sw, 0\leq s<1$, ending at the point $w\in
\partial\O$. Then the inverse image $\Gamma(s)=h^{-1}(\g(s))$ is a curve in $\D$ ending at some point $\z\in \T$
\cite[Proposition 2.14]{Pommerenke}. Thus $h$ has the limit $w$ along the curve $\Gamma$ and by \cite[Corollary
2.17]{Pommerenke} it has  radial limit $w$ at $\z$, i.e. $\lim_{r\to 1^{-}}h(r \z)=w$. For each $t>0$ then,
$$
\lim_{r\to 1^{-}}\phi_t(r\z)=\lim_{r\to 1^{-}}h^{-1}(e^{-ct}h(r\z))=
h^{-1}(e^{-ct}w)\in \D.
$$

Since $\phi_t$ is a bounded univalent function, $\lim_{r\to 1^{-}}|\phi_t'(r\z)|(1-r)=0$ so also $ \lim_{r\to
1^{-}}|\phi_t'(r\z)|(1-r)^{\frac{3-\la}{2}}=0$. Letting $f_{\z}(z)=f(\overline{\z}z)$ we have
$$
\lim_{r\to 1^{-}}|f_{\z}'(r\z)|(1-r)^{\frac{3-\la}{2}}=
\lim_{r\to 1^{-}}|f'(r)|(1-r)^{\frac{3-\la}{2}}=\frac{1-\la}{2}
$$
In addition $f_{\z}'$ is continuous on $\D$ so for fixed $t>0$,
$$
\lim_{r\to 1^{-}}|f_{\z}'(\phi_t(r\z))|=|f_{\z}'(h^{-1}(e^{-ct}w))|<\infty.
$$
Thus for all $t>0$,
\begin{equation}\label{nst}
\n{f_{\z}\circ\phi_t-f_{\z}}_{\cb^{\frac{3-\la}{2}}}\geq \limsup_{r\to
1^{-}}|f_{\z}'(\phi_t(r\z))\phi_t'(r\z)
-f_{\z}(r\z)|(1-r)^{\frac{3-\la}{2}}\geq\frac{1-\la}{2}.
\end{equation}

Thus in both cases $\n{f\circ\phi_t-f}_{\cb^{\frac{3-\la}{2}}}\geq\frac{1-\la}{2}>0$ for all $t$ close to $0$. Multiplying the
test function in each case by $2/(1-\la)$ gives the result.

If the Denjoy-Wolff point $b$ is the point $1$ then
$$
\phi_t(z)=h^{-1}(h(z)+ct)
$$
where $h$ is a univalent function mapping $\D$ onto a close-to-convex domain $\O$ with $h(0)=0$ and Re$(c)> 0$. If
$(\phi_t)$ consists of automorphisms of the disc then the map $z\to z+ct$ is an automorphism of $\O$ for each $t$, so $\O$ is
either a half-plane or a strip, and the finite part of $\partial\O$ is either a line or two parallel lines. Take any point $w$ in the
finite part of $\partial\O$, then there is a point $\z\in \T$ such that $h(\z)=w$. Then $\phi_t(\z)\in\partial\D$ and for each
$t>0$, $\phi_t(\z)\ne \z$. Let $f_{\z}(z)=(1-\overline{\z}z)^{-\frac{1-\la}{2}}$. Then for $\b\in \T$,
$$
\lim_{r\to 1^{-}}|f_{\z}'(r\b)|(1-r)^{\frac{3-\la}{2}}=\begin{cases}\frac{1-\la}{2}, \,\, \text{when}\, \b=\z,\\
0, \quad\,\,  \text{when}\, \b\ne\z,
\end{cases}
$$
In addition $f_{\z}'$ extends continuously on $\overline{\D}\setminus\{\z\}$, and in particular at each  point
$\z_t=\phi_t(\z)\in \T$, $t>0$. Note that $\z_t\ne \z$. Thus
$$
\lim_{r\to 1^{-}}f_{\z}'(\phi_t(r\z))=f_{\z}'(\z_t).
$$
Also $\phi_t'$ is a bounded function on $\D$ for each $t>0$. For fixed $t$ then
$$
\n{f_{\z}\circ\phi_t-f_{\z}}_{\cb^{\frac{3-\la}{2}}}\geq \limsup_{r\to
1^{-}}|f_{\z}'(\phi_t(r\z))\phi_t'(r\z)
-f_{\z}(r\z)|(1-r)^{\frac{3-\la}{2}}\geq\frac{1-\la}{2},
$$
and replacing $f_{\z}$ by $\frac{2}{1-\la}f_{\z}$ we have the assertion in the case when $\phi_t$ are automorphisms.

Finally if $\phi_t$ are not automorphisms, the map $z\to z+ct$ is not onto $\O$. Following the arguments of the analogous case
in  Theorem 3.1 of \cite{AndersonJovovicSmith}, let $t>0$ and pick a $z\in \O\setminus (\O+ct)$. Then there is $w\in
\partial\O$ and $t_0\in (0, t]$ such that $w+ct_0=z$ and $(w, z]\subset\O$. There is a $\z\in\T$ such that
$\lim_{r\to 1^{-}}h(r\z)=w$, and the argument for the case of interior Denjoy-Wolff point repeats word-for-word to obtain a
positive lower bound for the norm $\n{f_{\z}\circ\phi_t-f_{\z}}_{\cb^{\frac{3-\la}{2}}}$ as in (\ref{nst}). Again replacing
$f_{\z}$ by $\frac{2}{1-\la}f_{\z}$ gives the  assertion of the claim.

Having proved the claim we continue to finish the proof of the theorem. The test functions we used above are in $H^{2, \la}$ so
also in $X$. The identity embedding  map $i : X \to \cb^{\frac{3-\la}{2}}$ has closed graph so by the Closed Graph Theorem it
is a bounded operator, which means that there is a constant $C$ such that $\n{f}_{\cb^{\frac{3-\la}{2}}}\leq C\n{f}_X$ for
each $f\in X$. In particular then the test functions that we have used are not in $[\phi_t, X]$ so $[\phi_t, X]\subsetneq X$.
\end{proof}

\end{document}